\documentclass[11pt]{article}

\usepackage{amsmath, amssymb, amsthm}
\usepackage{hyperref}
\usepackage{dsfont}
\usepackage{enumitem}

\title{An extension of Stein's method incorporating dependence}

\author{
  Aleksandar~Bala\v{s}ev-Samarski\thanks{Department of Mathematics and Computer Science, Faculty of Science, University of Sarajevo, Bosnia and Herzegovina. Email: aleksandar.bs@pmf.unsa.ba}
  \and
  Abdol-Reza~Mansouri\thanks{Department of Mathematics \& Statistics, Queen's University, Canada. Email: mansouri@queensu.ca}
}

\date{May 24, 2026}

\newcommand{\E}[1]{\mathbb{E} \left[ #1 \right]}
\newcommand{\Esmall}[1]{\mathbb{E}[ #1 ]}
\newcommand{\N}{\mathbb{N}}
\newcommand{\R}{\mathbb{R}}

\newcommand{\ouriff}{\iff}
\newcommand{\ind}{\mathds{1}}
\newcommand{\ourimplies}{\implies}

\newtheorem{theorem}{Theorem}[section]   
\newtheorem{proposition}[theorem]{Proposition}

\theoremstyle{remark}
\newtheorem{remark}[theorem]{Remark}

\newcommand{\keywords}[1]{\par\noindent
  \textbf{Keywords: } #1}

\newcommand{\msc}[1]{\par\noindent
  \textbf{MSC 2020: } #1}

\begin{document}

\maketitle

\begin{abstract}
We extend Stein's method to include dependence with respect to an auxiliary random variable, for conditional laws for which Stein's characterizations do exist. 
\end{abstract}

\keywords{Stein's method; Stein operators
}

\msc{60B10, 60H07}

\section{Introduction}

Stein's method is a powerful tool for deriving quantitative bounds on the distance between the law of a given random variable and a target law, and is based on the characterization of the target distribution through a suitable operator acting on a suitable class of test functions \cite{S1972, S1986, NP2012}.
\par
To make matters more precise, let the target probability law on $\R$ be denoted by $\mu$.
Stein's method consists in characterizing the target distribution $\mu$ by constructing a suitable linear operator $\mathcal{N}$ (the ``Stein operator'' for the target distribution $\mu$) acting on a suitable class of functions $\mathcal{C}$, such that the law of $X$ is equal to $\mu$ if and only if
$ \Esmall{\mathcal{N} \! f(X)} = 0$ for all $f \in \mathcal{C}. $
\par
This characterization of the target law $\mu$ by the Stein operator $\mathcal{N}$ can in turn be used to provide bounds on the distance between the law of a given random variable $X$ and $\mu$, using the device known as Stein's equation, given by
\[
\mathcal{N} \! f(x) = h(x) - \Esmall{h(M)}, \quad x \in \R,
\] 
where $h$ is assumed to belong to a suitably rich class
of Borel measurable functions, and where $M$ is a random variable on $(\Omega, \mathcal{F}, \mathbb{P})$ having the desired
law $\mu$.
The fundamental intuition here is that if the the random variable $X$ has law close to that of $M$, i.e., to the target law $\mu$, then the expectation $\Esmall{\mathcal{N}\!f(X)}$ should be ``close'' to $0$, for any function $f$ satisfying the Stein equation. This general idea can be fruitfully applied to Malliavin-differentiable random variables, yielding a characterization of the target law in terms of Malliavin derivatives and other related operators, leading to the extension known as Malliavin-Stein~\cite{NP2012}.
\par
In recent work \cite{P2022, T2025, ABSARM2026} Stein's method has been extended to yield a characterization for a random variable $X$ to have a desired law $\mu$ and to be independent of an auxiliary random variable $Y$.
More specifically, for $M$ a random variable with the desired law $\mu$ and $Y$ an auxiliary random variable,  a Stein-type characterization is provided for the joint distribution $\mathbb{P}_{X,Y}$ of the random variables $X,Y$ to equal the product distribution $\mathbb{P}_M \otimes \mathbb{P}_Y$.
Such a characterization was initially introduced for normally distributed random variables in~\cite{P2022}. This characterization was then extended to normally distributed random vectors, including extensions to joint asymptotic normality and independence for Malliavin-differentiable random variables~\cite{T2025}, with further extensions to Gamma-distributed random variables in~\cite{T2024}, and finally to arbitrary laws in \cite{ABSARM2026}. 
\par
In this work, we consider the problem of deriving a Stein characterization for a given random variable $X$ to have a \emph{prescribed dependence} on an auxiliary random variable $Y$. For simplicity of presentation and without loss of generality, we shall assume all random variables to be real-valued.
To make matters precise, let $M$ be a random variable that has the prescribed dependence on the random variable $Y$; With $\mu_M$ denoting the law of $M$ and $\mu_Y$ that of $Y$, and
$\mathring{Y}(\Omega)$ denoting the essential range of $Y$,
the dependence of $M$ on $Y$ is captured by the family $(\nu_y)_{y \in \mathring{Y}(\Omega)}$ of conditional probability measures on the Borel sigma-algebra $\mathcal{B}_\R$ of $\R$, which are defined by the relations 
\[
    \mu_M(A) = \int_\R \nu_y(A) \mu_Y(\mathrm{d}y), \ \forall A \in \mathcal{B}_{\R}.  
\]
The existence of such conditional probability measures is guaranteed in our setting of real-valued random variables, as well as in substantially more general ones.
We refer the interested reader to \cite{D2002, K2021} for a detailed treatment of conditional probability measures and regular conditional probabilities.
\par
We assume given a suitable class $\mathcal{C}$ of functions, and for each $y \in \mathring{Y}(\Omega)$, we assume given a Stein operator $\mathcal{N}_y$ for the corresponding conditional law $\nu_y$, acting on $\mathcal{C}$; that is, we assume that for any random variable $Z$, we have, for all $y \in \mathring{Y}(\Omega)$:
\[
    Z \sim \nu_y \ouriff \E{ \mathcal{N}_y f(Z)} = 0 \quad (\forall f \in \mathcal{C}).
\]
Our goal in this paper is to derive a Stein characterization for the joint law $\mathbb{P}_{X,Y}$ of $X, Y$ to equal the joint law $\mathbb{P}_{M,Y}$ of $M, Y$.
It is in this precise sense that $X$ will have the same dependence on $Y$ as $M$ does.
To the best of our knowledge, this is the first such extension of Stein's method.
\par
With the Stein characterization thus obtained, we then proceed to estimate Wasserstein and total variation distances between the joint laws $\mathbb{P}_{X,Y}$ and $\mathbb{P}_{M,Y}$.
Note that if $M$ and $Y$ are assumed independent, then all the conditional probability measures $\nu_y$ are equal, and the joint law $\mathbb{P}_{M,Y}$ of $M,Y$ is then given by the product measure $\mathbb{P}_M \otimes \mathbb{P}_Y$.
Our Stein characterization in this very specific case then reduces to the one obtained in \cite{ABSARM2026}.

\section{Main results}
\label{sec:Main}

Let now $M, Y$ be random variables on $(\Omega, \mathcal{F}, \mathbb{P})$, with respective laws $\mu_M$ and $\mu_Y$, and with the dependence of $M$ on $Y$ given by the family $(\nu_y)_{\mathring{Y}(\Omega)}$ of probability laws, where
$\mathring{Y}(\Omega)$ is the essential range of $Y$, and where
\[
    \mu_M(A) = \int_\R \nu_y(A) \mu_Y(\mathrm{d}y), \quad (\forall A \in \mathcal{B}_\R).
\]
We assume given a Stein operator $\mathcal{N}_y$ for the law $\nu_y$ for
all $y \in \mathring{Y}(\Omega)$, as well as a suitable common class $\mathcal{C}$ of functions for the family of operators $(\mathcal{N}_y)_{y \in \mathring{Y}(\Omega)}$. 
We also assume given a rich enough class $\mathcal{H}$ of Borel-measurable functions on $\R$ such that for any $h \in \mathcal{H}$, and for any $y \in \mathring{Y}(\Omega)$, the Stein equation
\[
\mathcal{N}_y f = h
\]
has a unique solution in $\mathcal{C}$.
\par
Let now $X$ be a random variable on $(\Omega, \mathcal{F}, \mathbb{P})$. As stated earlier, we wish to derive a Stein characterization for $X$ to have the same dependence on $Y$ as $M$ does, that is, we are looking for a Stein-like characterization for the joint law $\mathbb{P}_{X,Y}$ to equal the joint law $\mathbb{P}_{M,Y}$. Even though for notational simplicity we assume all our random variables to be $\R$-valued, our results can be trivially extended to the case where they are vector-valued.
\par
With the notation in place, the following preliminary result yields a necessary condition for $\mathbb{P}_{X,Y}$ and $\mathbb{P}_{M,Y}$ to be equal:
\par

\begin{proposition}\label{pr:1}
Assume the mapping $(x,y) \mapsto \mathcal{N}_yf(x)$ is $\mathbb{P}_{M,Y}$-essentially bounded on $\R^2$ for all $f \in {\mathcal{C}}$. 
We then have:
\begin{equation}\label{eq:prop1}
    \mathbb{P}_{X,Y} = \mathbb{P}_{M,Y} \ourimplies
    \E{\mathcal{N}_Y f(X) \mid \sigma(Y)} = 0 \, \text{a.s.} \, \, (\forall f \in \mathcal{C}).  
\end{equation}
\end{proposition}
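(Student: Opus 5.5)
We prove the implication by reducing the conditional-expectation identity to a family of unconditional ones, tested against bounded $\sigma(Y)$-measurable functions. Fix $f \in \mathcal{C}$ and write $g(x,y) := \mathcal{N}_y f(x)$. We take for granted that $g$ is jointly Borel measurable on $\R^2$, which is implicit in the statement since $\mathcal{N}_Y f(X) = g(X,Y)$ must be a random variable. By hypothesis $g$ is $\mathbb{P}_{M,Y}$-essentially bounded. Since $\mathbb{P}_{X,Y} = \mathbb{P}_{M,Y}$, $g$ is then also $\mathbb{P}_{X,Y}$-essentially bounded. Hence $g(X,Y)$ is integrable, and its conditional expectation given $\sigma(Y)$ is well defined.

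By the defining property of conditional expectation, it suffices to show that
\[
\E{g(X,Y)\,\ind_{\{Y \in B\}}} = 0 \quad \text{for all } B \in \mathcal{B}_\R .
\]
Indeed, $\E{g(X,Y) \mid \sigma(Y)}$ is then a $\sigma(Y)$-measurable integrable random variable whose integral over every set of $\sigma(Y)$ vanishes, so it is $0$ almost surely. To get these identities, first transfer to $(M,Y)$. The function $(x,y)\mapsto g(x,y)\ind_B(y)$ is bounded and measurable ($\mathbb{P}_{X,Y}$-a.e.), and the joint laws coincide, so
\[
\E{g(X,Y)\ind_B(Y)} = \int_{\R^2} g(x,y)\ind_B(y)\,\mathbb{P}_{M,Y}(\mathrm{d}x,\mathrm{d}y) = \E{g(M,Y)\ind_B(Y)} .
\]

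Next, disintegrate $\mathbb{P}_{M,Y}$ along $Y$. Here we use that $(\nu_y)$ is a regular conditional distribution of $M$ given $Y$, that is, $\mathbb{P}_{M,Y}(\mathrm{d}x,\mathrm{d}y) = \nu_y(\mathrm{d}x)\,\mu_Y(\mathrm{d}y)$. This is the meaning of ``conditional probability measures'' in the paper; the displayed mixture relation for $\mu_M$ alone would not suffice, and I will state explicitly that this is the intended interpretation. By the disintegration (Fubini-type) theorem for bounded measurable integrands,
\[
\E{g(M,Y)\ind_B(Y)} = \int_B \left( \int_\R \mathcal{N}_y f(x)\,\nu_y(\mathrm{d}x) \right) \mu_Y(\mathrm{d}y).
\]
For $\mu_Y$-a.e.\ $y$, the point $y$ lies in $\mathring{Y}(\Omega)$. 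For such $y$, taking $Z \sim \nu_y$ in the Stein characterization gives $\int_\R \mathcal{N}_y f\,\mathrm{d}\nu_y = \E{\mathcal{N}_y f(Z)} = 0$. The inner integral therefore vanishes $\mu_Y$-a.e., and the whole expression is $0$.

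The main obstacle is measure-theoretic bookkeeping rather than any real difficulty. Three points need care:
\begin{itemize}
\item the joint measurability of $(x,y)\mapsto\mathcal{N}_yf(x)$;
\item the fact that essential boundedness holds only $\mathbb{P}_{M,Y}$-a.e.\ and not for every $y$, so that for each fixed $y$ the inner integral is finite only for $\mu_Y$-a.e.\ $y$;
\item reading $(\nu_y)$ as a genuine disintegration of $\mathbb{P}_{M,Y}$.
\end{itemize}
For the second point, boundedness a.e.\ is enough, because Fubini's theorem applied to the integrable function $|g|\ind_B$ shows that the inner integrals are finite for $\mu_Y$-a.e.\ $y$. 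Since $f\in\mathcal{C}$ was arbitrary, this yields \eqref{eq:prop1}.
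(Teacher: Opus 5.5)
Your proof is correct and follows essentially the same route as the paper. You test $\mathcal{N}_Y f(X)$ against bounded $\sigma(Y)$-measurable functions (you use indicators $\ind_B(Y)$, the paper uses an arbitrary bounded $k(Y)$), rewrite the expectation via the disintegration $\mathbb{P}_{M,Y}(\mathrm{d}x,\mathrm{d}y)=\nu_y(\mathrm{d}x)\,\mu_Y(\mathrm{d}y)$, and kill the inner integral with the Stein characterization of $\nu_y$. Your explicit reading of $(\nu_y)$ as a disintegration of the joint law, rather than merely the mixture identity for $\mu_M$, is exactly what the paper's proof uses implicitly in its first display.
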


\begin{proof}

    Assume that $\mathbb{P}_{X,Y} = \mathbb{P}_{M,Y}$; We then have, for any $\mathbb{P}_{X,Y}$-integrable function
    $g:\R \times \R \to \R$,
    \[
    \int_{\R^2} g \, \mathrm{d}\mathbb{P}_{X,Y} = 
    \int_\R \left(\int_\R g(x,y) \nu_y(\mathrm{d}x)\right) 
    \]
    Let now $k:\R \to \R$ be Borel-measurable and bounded. For every
    $f \in \mathcal{C}$, We have, by integrability of $(x,y) \mapsto \mathcal{N}_y f(x)$, 
    \begin{eqnarray*}
\E{k(Y) \mathcal{N}_Y f(X)} &=& \int_\R \left(\int_\R k(y) \mathcal{N}_y f(x) \nu_y(\mathrm{d}x)\right) \mu_Y(\mathrm{d}y) \\
    &=& 
\int_\R k(y) \left( \int_\R \mathcal{N}_y f(x) \nu_y(dx) \right) \mu_Y(\mathrm{d}y) = 0,
\end{eqnarray*}

    and since $k$ was arbitrary, it follows that 
    \[
    \E{\mathcal{N}_Y f(X) \mid \sigma(Y)} = 0, \ \mathrm{a.s.} \ (\forall f \in \mathcal{C}).
    \]
\end{proof}

In what follows, we denote by $\tilde{\mathcal{C}}$ the class of functions $f \colon \R \times \R \to \R$, $(x,y) \mapsto f(x,y)$, for which $f(\cdot,y) \in \mathcal{C}$ for all $y \in \R$.
In the expression $\mathcal{N} \! f(x,y)$, the operator $\mathcal{N}$ is assumed to act only on the first argument.
\par
With the random variable $Y$ having essential range 
$\mathring{Y}(\Omega)$, we define 
the class $\tilde{\mathcal{H}}$ to be the class of functions $h \colon \R \times \R \to \R$, such that for all $y \in \mathring{Y}(\Omega)$, the function $h_y \colon \R \to \R$ defined by $h_y(x) = h(x,y)$ for all $x \in \R$, belongs to $\mathcal{H}$, and such that $h(x,y) = 0$ for all $x \in \R$ and $y \in \R \setminus \mathring{Y}(\Omega)$. 
We assume the subset of $\tilde{\mathcal{H}}$ consisting of bounded functions to be \emph{separating} on $\R^2$ for the family of joint laws $(\mathbb{P}_{Z,Y})_Z$ indexed by all $\R$-valued random variables on $(\Omega,\mathcal{F},\mathbb{P})$. 
\par
Let $h \in \tilde{\mathcal{H}}$, and consider the equation
\begin{equation}\label{eq:MSE}
    \mathcal{N}_y  f_y(x) = h_y(x) - \int_\R h_y(z) \nu_y(\mathrm{d}z), \ x \in \R,
\end{equation}
where $y \in \R$.
For all $y \in \mathring{Y}(\Omega)$, the Stein equation \eqref{eq:MSE} has a unique solution in $\mathcal{C}$, which we denote by $f_{y,h}$.
Defining 
\[
    f_h \colon \R \times \R \to \R, \ (x,y) \mapsto f_h(x,y) 
\]
by 
\[
    f_h(x,y) = \begin{cases} 
        f_{y,h}(x), & x \in \R, \ y \in\mathring{Y}(\Omega) \\
        0,          & x \in \R, \ y \notin\mathring{Y}(\Omega)
    \end{cases}.
    \]
we can write 
\begin{equation}\label{eq:PSE}
    \mathcal{N}_y f_{h}(x,y) = h(x,y) - \int_\R h(z,y) \nu_y(\mathrm{d}z), \ \forall x,y \in \R.
\end{equation}

With the classes $\tilde{\mathcal{C}}$ and $\tilde{\mathcal{H}}$ as defined above, we can now state our main result:
\begin{theorem}\label{th:1}
Let $M, Y$ be random variables on $(\Omega, \mathcal{F}, \mathbb{P})$, with respective laws $\mu_M$ and $\mu_Y$, and with the dependence of $M$ on $Y$ given by the family $(\nu_y)_{\mathring{Y}(\Omega)}$ of probability laws, where
$\mathring{Y}(\Omega)$ is the essential range of $Y$, and where
\[
    \mu_M(A) = \int_\R \nu_y(A) \mu_Y(\mathrm{d}y), \quad (\forall A \in \mathcal{B}_\R).
\]
We assume given a Stein operator $\mathcal{N}_y$ for the law $\nu_y$ for
all $y \in \mathring{Y}(\Omega)$, as well as a suitable common class $\mathcal{C}$ of functions for the family of operators $(\mathcal{N}_y)_{y \in \mathring{Y}(\Omega)}$. We assume the mapping $(x,y) \mapsto \mathcal{N}_yf(x,y)$ to be $\mathbb{P}_{M,Y}$-essentially bounded on $\R^2$ for all $f \in \tilde{\mathcal{C}}$ and continuous in $y$.
Let $X$ be a $\R$-valued random variable on $(\Omega, \mathcal{F}, \mathbb{P})$.
We then have:
\[
    \mathbb{P}_{X,Y} = \mathbb{P}_{M,Y} \ouriff 
    \E{\mathcal{N}_Y f(X,Y) } = 0 \, \quad (\forall f \in \mathcal{\tilde{\mathcal{C}}}).  
\]
\end{theorem}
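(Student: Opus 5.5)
We prove the two implications separately. The forward direction reduces to the computation of Proposition~\ref{pr:1}, now applied to functions depending on both arguments. The reverse direction uses the parametrized Stein equation \eqref{eq:PSE} and the separating property of the bounded elements of $\tilde{\mathcal{H}}$.

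\emph{Forward direction.} Assume $\mathbb{P}_{X,Y} = \mathbb{P}_{M,Y}$ and fix $f \in \tilde{\mathcal{C}}$. The map $(x,y) \mapsto \mathcal{N}_y f(x,y)$ is $\mathbb{P}_{M,Y}$-essentially bounded, hence integrable, so we may disintegrate the joint law along $(\nu_y)$:
\[
\E{\mathcal{N}_Y f(X,Y)} = \int_\R \left( \int_\R \mathcal{N}_y f(x,y)\, \nu_y(\mathrm{d}x) \right) \mu_Y(\mathrm{d}y).
\]
For $\mu_Y$-a.e.\ $y$, which we may take in $\mathring{Y}(\Omega)$, the function $f(\cdot,y)$ belongs to $\mathcal{C}$. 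Since $\mathcal{N}_y$ is a Stein operator for $\nu_y$, the inner integral vanishes, and so the whole expectation is $0$. This is exactly Proposition~\ref{pr:1} with $k \equiv 1$, except that $f$ may now depend on $y$; that dependence is harmless because $y$ is frozen inside the inner integral.

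\emph{Reverse direction.} Assume $\E{\mathcal{N}_Y f(X,Y)} = 0$ for all $f \in \tilde{\mathcal{C}}$. Take a bounded $h \in \tilde{\mathcal{H}}$ and let $f_h$ be the function constructed before the theorem.
\begin{enumerate}[label=(\roman*)]
\item Show that $f_h \in \tilde{\mathcal{C}}$. For $y \in \mathring{Y}(\Omega)$ we have $f_h(\cdot,y) = f_{y,h} \in \mathcal{C}$. For $y \notin \mathring{Y}(\Omega)$ we have $f_h(\cdot,y) = 0$, and we need $0 \in \mathcal{C}$; this holds because $\mathcal{C}$ is a class on which the linear operators $\mathcal{N}_y$ act.
\item Evaluate \eqref{eq:PSE} at $(X,Y)$ and take expectations. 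With $\bar h(y) := \int_\R h(z,y)\, \nu_y(\mathrm{d}z)$, this gives
\[
0 = \E{\mathcal{N}_Y f_h(X,Y)} = \E{h(X,Y)} - \E{\bar h(Y)}.
\]
\item Apply the forward computation to $(M,Y)$. Since $\E{h(M,Y)} = \int_\R \bar h(y)\, \mu_Y(\mathrm{d}y) = \E{\bar h(Y)}$, we obtain $\E{h(X,Y)} = \E{h(M,Y)}$ for every bounded $h \in \tilde{\mathcal{H}}$.
\item Conclude $\mathbb{P}_{X,Y} = \mathbb{P}_{M,Y}$, since the bounded elements of $\tilde{\mathcal{H}}$ are assumed separating for the family of joint laws $(\mathbb{P}_{Z,Y})_Z$.
\end{enumerate}

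\emph{Main obstacle: measurability and integrability in step (ii).} The function $f_h$ is defined fibrewise, one $y$ at a time. Before $\mathcal{N}_Y f_h(X,Y)$ can be treated as a random variable, we must know that $(x,y) \mapsto \mathcal{N}_y f_h(x,y)$ is jointly Borel and integrable under $\mathbb{P}_{X,Y}$, which is not yet known to equal $\mathbb{P}_{M,Y}$. Identity \eqref{eq:PSE} resolves this. The right-hand side $h(x,y) - \bar h(y)$ is bounded by $2\sup|h|$. It is jointly measurable because $y \mapsto \int_\R h(z,y)\,\nu_y(\mathrm{d}z)$ is measurable, a standard property of kernels (here the regular conditional law). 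Hence the left-hand side of \eqref{eq:PSE} is jointly measurable and bounded for every $(x,y)$, so the expectation in step (ii) is finite and can be split term by term.

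The hypothesis that $\mathcal{N}_y f$ is continuous in $y$ gives a second route to joint measurability of $\mathcal{N}_y f_h$, through Carath\'eodory-type arguments. We would invoke it there if the kernel argument needed reinforcement.
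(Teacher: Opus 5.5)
Your proof is correct, but your forward direction takes a different and more direct route than the paper's.

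The paper first treats finite-range $Y$. It writes $f(x,y)=\sum_i f_{y_i}(x)\ind_{\{y_i\}}(y)$, so that Proposition~\ref{pr:1} applies to the $y$-independent functions $f_{y_i}\in\mathcal{C}$. It then approximates general $Y$ by simple $\sigma(Y)$-measurable $Y_n$, using continuity in $y$ for almost sure convergence of $\mathcal{N}_{Y_n} f(X,Y_n)$ and essential boundedness for uniform integrability.

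You instead disintegrate $\mathbb{P}_{M,Y}$ once. Since $y$ is fixed inside the inner $\nu_y$-integral, the Stein identity for $\nu_y$ makes that integral vanish for $\mu_Y$-a.e.\ $y$. This removes both the finite-range reduction and the continuity hypothesis; your argument never uses continuity, so your closing remark about it is unnecessary. It also avoids a delicate point in the paper's limiting step. There, $\E{\mathcal{N}_{Y_n} f(X,Y_n)}=0$ does not follow from the finite-range case, because the conditional law of $X$ on $\{Y_n=y_i\}$ is an average of the $\nu_y$ over that event, not $\nu_{y_i}$, the law for which $\mathcal{N}_{y_i}$ is the Stein operator.

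Your reverse direction is the paper's argument, with one small change: you take expectations of \eqref{eq:PSE} at $(X,Y)$ rather than disintegrating $\mathbb{P}_{X,Y}$ along the conditional laws $\bar{\nu}_y$ of $X$ given $Y$. Your checks that $f_h\in\tilde{\mathcal{C}}$ and that $\mathcal{N}_y f_h(x,y)=h(x,y)-\bar h(y)$ is jointly measurable and bounded make explicit what the paper leaves implicit.
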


\begin{remark}
In case $\mathbb{P}_{M,Y} = \mathbb{P}_M \otimes \mathbb{P}_Y$, i.e.\ $M$ and $Y$ are assumed independent, all the Stein operators $\mathcal{N}_y$ ($y \in \mathring{Y}(\Omega)$) are then equal to the same Stein operator $\mathcal{N}$, and
the characterization obtained here reduces to the one in \cite{ABSARM2026}.
\end{remark}

\begin{proof}[Proof of Theorem~\ref{th:1}]

($\Longrightarrow$): Assume $\mathbb{P}_{X,Y} = \mathbb{P}_{M,Y}$.
Assume first that $Y$ has finite range, and let $Y(\Omega) = \lbrace y_i \rbrace_{i=1}^N$;
Without loss of generality, we assume $\mathbb{P}(Y = y_i) > 0$ for all $i$.
Let now $f \in \tilde{\mathcal{C}}$.
We have, for all $x \in \R$ and for $\mu_Y-$all $y \in \R$:
\[
    f(x,y) = \sum_{i=1}^N f(x,y_i) \ind_{\lbrace y_i \rbrace}(y).
\]
Denote by $f_{y_i}$ the mapping $x \mapsto f(x,y_i)$, for each $i$. 
Then $f_{y_i} \in \mathcal{C}$ for all $i \in \N$, and we 
have for all $x \in \R$ and for $\mu_Y-$all $y \in \R$:
\[
   f(x,y) = \sum_{i=1}^N f_{y_i}(x) \ind_{\lbrace y_i \rbrace}(y).
\]
It follows from Proposition~\ref{pr:1} that
\[
\E{\mathcal{N}_y f(X,y) \mid \sigma(Y)} = 0 \ a.s., \ \forall y \in \R.
\]
We have:
\[
\E{\mathcal{N}_y f(X,y) \mid \sigma(Y)} = \sum_i c_i(y) \ind_{\{Y=y_i\}},
\]
where the $c_i$ are functions depending on $y$ (as well as on $f$), and hence we have $c_i(y)=0$ for all $y \in \R$ and for all $i$. We also have:
\[
\E{\mathcal{N}_Y f(X,Y) \mid \sigma(Y)} = \sum_i d_i \ind_{\{Y=y_i\}},
\]
and hence $d_i = c_i(y_i) = 0$ for all $i$. Hence,
\[
\E{\mathcal{N}_Y f(X,Y) \mid \sigma(Y)} = 0, \ a.s.
\]
for all $f \in \tilde{\mathcal{C}}$.
and it follows that,
\[
    \E{ \mathcal{N}_Y f(X,Y)} = \E{ \E{ \mathcal{N}_Y f(X,Y) \mid \sigma(Y)}} = 0, 
\]
We have thus shown that for $Y$ having finite range,
\[
\mathbb{P}_{X,Y} = \mathbb{P}_{M,Y} \Rightarrow \E{ \mathcal{N}_Y f(X,Y)} = 0, \quad (\forall f \in \tilde{\mathcal{C}})
\]
For general $Y$, consider a sequence $(Y_n)_{n \in \N}$ of $\sigma(Y)-$measurable simple functions converging almost surely to $Y$, and let $f \in \tilde{\mathcal{C}}$; The sequence $\left(\mathcal{N}_{Y_n}f(X,Y_n)\right)_{n \in \N}$ then converges almost surely to $\mathcal{N}_{Y}f(X,Y)$ on $\Omega$, and the essential boundedness assumption yields uniform integrability of the sequence $\left(\mathcal{N}_{Y_n}f(X,Y_n)\right)_{n \in \N}$, and hence we obtain,
\[
\E{ \mathcal{N}_Y f(X,Y)} = \E{ \lim_n \mathcal{N}_{Y_n} f(X,Y_n)} = \lim_n \E{ \mathcal{N}_{Y_n}f(X,Y_n)} = 0.
\]
\par
($\Longleftarrow$): 
With $\mu_X$ denoting the law of $X$, let $(\bar{\nu}_y)_{y \in \mathring{Y}(\Omega)}$ be the family of conditional probability measures of $X$ given $Y$, i.e., such that for any   $\mathbb{P}_{X,Y}$-integrable function $g\colon\R^2 \to \R$, we have
\[
    \int_{\R^2} g \, \mathrm{d}\mathbb{P}_{X,Y} = 
    \int_\R \left(\int_\R g(x,y) \bar{\nu}_y(\mathrm{d}x)\right) \mu_Y(\mathrm{d}y).
    \]
Let $h$ be an arbitrary bounded element of $\tilde{\mathcal{H}}$, and let $f_h \in \tilde{\mathcal{C}}$ be the unique solution to \eqref{eq:PSE}. We have:
\begin{eqnarray*}
   0=\E{ \mathcal{N}_Y f_h(X,Y)} &=& 
    \int_\R \left ( \int_\R \mathcal{N}_y f_h(x,y) \bar{\nu}_y(\mathrm{d}x) \right) \mu_Y(\mathrm{d} y) \\
    &=& \int_{\R^2} \left(h(x,y) - \int_\R h(z,y) \nu_y (\mathrm{d}z) \right) \bar{\nu}_y(\mathrm{d} x) \mu_Y(\mathrm{d}y) \\
    &=& \int_{\R^2} h(x,y) \bar{\nu}_y(\mathrm{d}x) \mu_Y(\mathrm{d} y) -
    \int_{\R^2} h(z,y) \nu_y(\mathrm{d}z) \mu_Y(\mathrm{d} y) \\
    &=& \mathbb{E}^{\mathbb{P}_{X,Y}}[h] - \mathbb{E}^{\mathbb{P}_{M,Y}}[h],
\end{eqnarray*}
and since the bounded element $h \in \tilde{\mathcal{H}}$ was assumed arbitrary, and the subset of
$\tilde{\mathcal{H}}$ consisting of bounded functions was assumed separating, the result follows.
\end{proof}

\par


\subsection{Bounds on the total variation and Wasserstein distances between laws using Stein equations}\label{subs:32}

Choosing an appropriate class $\tilde{\mathcal{E}}$ that contains $\tilde{\mathcal{C}}$, we obtain
\begin{equation}\label{eq:PSE1}
    \sup_{h \in \tilde{\mathcal{H}}} \left| \Esmall{h(X,Y)} - \Esmall{h(M,Y)} \right| \leq 
    \sup_{f \in \tilde{\mathcal{E}}} \left| \Esmall{\mathcal{N}_Y \! f(X,Y)} \right|.
\end{equation}

Let then $\tilde{\mathcal{H}}_{\mathrm{TV}}$ denote the class of all
indicator functions of Borel subsets of $\R^2$, and let
$\tilde{\mathcal{E}}_{\mathrm{TV}}$ be a class of functions that contain all the solutions of \eqref{eq:PSE} for $h \in \tilde{\mathcal{H}}_{\mathrm{TV}}$; similarly, let $\tilde{\mathcal{H}}_{\mathrm{W}}$ denote the class of all Lipschitz $1$ functions on $\R^2$, and let
$\tilde{\mathcal{E}}_{\mathrm{W}}$ be a class of functions that contain all the solutions of \eqref{eq:PSE} for $h \in \tilde{\mathcal{H}}_{\mathrm{W}}$; We then obtain immediately:
\begin{theorem}\label{th:TVandWBound}
Let $M, Y$ be random variables on $(\Omega, \mathcal{F}, \mathbb{P})$, with respective laws $\mu_M$ and $\mu_Y$, and with the dependence of $M$ on $Y$ given by the family $(\nu_y)_{\mathring{Y}(\Omega)}$ of probability laws, where
$\mathring{Y}(\Omega)$ is the essential range of $Y$, and where
\[
    \mu_M(A) = \int_\R \nu_y(A) \mu_Y(\mathrm{d}y), \quad (\forall A \in \mathcal{B}_\R).
\]
We assume given a Stein operator $\mathcal{N}_y$ for the law $\nu_y$ for
all $y \in \mathring{Y}(\Omega)$, as well as a suitable common class $\mathcal{C}$ of functions for the family of operators $(\mathcal{N}_y)_{y \in \mathring{Y}(\Omega)}$. Let $X$ be a given random variable.
With $d_{\textrm{TV}}$ denoting the total variation distance, and
$d_{\textrm{W}}$ the Wasserstein distance between laws, we have:
\begin{equation*} 
    d_{\textrm{TV}} (\mathbb{P}_{X,Y}, \mathbb{P}_{M,Y}) \leq
    \sup_{f \in \tilde{\mathcal{E}}_{TV}} \E{\mathcal{N}_Y \! f(X,Y)}
\end{equation*} 
and
\begin{equation*}
    d_{\textrm{W}} (\mathbb{P}_{X,Y}, \mathbb{P}_{M,Y}) \leq
    \sup_{f \in \tilde{\mathcal{E}}_{W}} \E{\mathcal{N}_Y \! f(X,Y)}.
\end{equation*}
\end{theorem}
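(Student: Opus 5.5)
The plan is to specialize the general bound \eqref{eq:PSE1} to the two distances, using the variational (dual) representations of total variation and Wasserstein distance. For total variation, write
\[
d_{\mathrm{TV}}(\mathbb{P}_{X,Y},\mathbb{P}_{M,Y}) = \sup_{A \in \mathcal{B}_{\R^2}} \left| \mathbb{P}((X,Y)\in A) - \mathbb{P}((M,Y)\in A)\right| = \sup_{h \in \tilde{\mathcal{H}}_{\mathrm{TV}}} \left| \Esmall{h(X,Y)} - \Esmall{h(M,Y)} \right|.
\]
For the Wasserstein distance, use Kantorovich--Rubinstein duality:
\[
d_{\mathrm{W}}(\mathbb{P}_{X,Y},\mathbb{P}_{M,Y}) = \sup_{h \in \tilde{\mathcal{H}}_{\mathrm{W}}} \left| \Esmall{h(X,Y)} - \Esmall{h(M,Y)} \right|.
\]
The Wasserstein identity is assumed to hold whenever both joint laws have finite first moments, and the bound is read as trivial otherwise.

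Fix $h$ in $\tilde{\mathcal{H}}_{\mathrm{TV}}$ or $\tilde{\mathcal{H}}_{\mathrm{W}}$, and let $f_h$ be the solution of \eqref{eq:PSE}. This solution lies in $\tilde{\mathcal{E}}_{\mathrm{TV}}$ or $\tilde{\mathcal{E}}_{\mathrm{W}}$, respectively, by the definition of these classes. Evaluate \eqref{eq:PSE} at $(x,y)=(X,Y)$ and take expectations to get
\[
\Esmall{\mathcal{N}_Y f_h(X,Y)} = \Esmall{h(X,Y)} - \E{\int_\R h(z,Y)\,\nu_Y(\mathrm{d}z)}.
\]
The last term equals $\Esmall{h(M,Y)}$. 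This follows from the disintegration $\mathbb{P}_{M,Y}(\mathrm{d}x,\mathrm{d}y) = \nu_y(\mathrm{d}x)\mu_Y(\mathrm{d}y)$, exactly as in the proof of Proposition~\ref{pr:1} and the ($\Longleftarrow$) part of Theorem~\ref{th:1}. Since $X$ and $M$ are paired with the same $Y$, the outer integral is against $\mu_Y$ in both cases.

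It follows that $|\Esmall{h(X,Y)}-\Esmall{h(M,Y)}| = |\Esmall{\mathcal{N}_Y f_h(X,Y)}| \le \sup_{f\in\tilde{\mathcal{E}}}|\Esmall{\mathcal{N}_Y f(X,Y)}|$. Taking the supremum over $h$ gives both inequalities. In the statement the absolute value is dropped inside the supremum. This is harmless if $\tilde{\mathcal{E}}$ is symmetric ($f\in\tilde{\mathcal{E}} \Rightarrow -f\in\tilde{\mathcal{E}}$), which follows from linearity of $\mathcal{N}_y$ because $-f_h=f_{-h}$. Otherwise one replaces $A$ by its complement in the TV case, using $\ind_{A^c} = 1-\ind_A$ and the fact that constants cancel. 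In the Wasserstein case one replaces $h$ by $-h$, which is still $1$-Lipschitz.

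The main obstacle is measure-theoretic bookkeeping rather than any estimate. One needs joint measurability of $(x,y)\mapsto f_h(x,y)$ and of $\mathcal{N}_y f_h(x,y)$ so that the expectations make sense, and integrability of $\mathcal{N}_Y f_h(X,Y)$. One must also check that setting $h$ and $f_h$ to $0$ off $\mathring{Y}(\Omega)$ does not matter. It does not, since $Y\in\mathring{Y}(\Omega)$ almost surely, so the bound is insensitive to this convention. A further issue is that indicators and Lipschitz functions must belong to $\mathcal{H}$ slice-wise, so that the Stein equation is solvable for each $y$. I would treat this as part of the standing hypotheses on $\tilde{\mathcal{E}}_{\mathrm{TV}}$ and $\tilde{\mathcal{E}}_{\mathrm{W}}$ and make it explicit. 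Given that, the theorem follows immediately from \eqref{eq:PSE1}.
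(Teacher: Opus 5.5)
Your proposal is correct and follows the paper's route: the paper obtains this theorem ``immediately'' by specializing \eqref{eq:PSE1} to $\tilde{\mathcal{H}}_{\mathrm{TV}}$ and $\tilde{\mathcal{H}}_{\mathrm{W}}$ through the variational representations of $d_{\mathrm{TV}}$ and $d_{\mathrm{W}}$, which is exactly your plan. Your extra steps fill in details the paper leaves implicit: deriving \eqref{eq:PSE1} by evaluating \eqref{eq:PSE} at $(X,Y)$ and using the disintegration of $\mathbb{P}_{M,Y}$, and removing the missing absolute value (it is your complement and $h\mapsto -h$ argument, not the claimed symmetry of $\tilde{\mathcal{E}}$, that actually does this).
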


\bibliographystyle{plain} 
\bibliography{references} 

\end{document}